\address{Graduate School of Mathematics Nagoya University, Chikusa-ku 
Nagoya 464-8602 Japan}
\email{kazuto.iijima@math.nagoya-u.ac.jp}
\subjclass[2000]{Primary~05E05}
\thanks{}
\dedicatory{}
\newtheorem{definition}{Definition}[section]
\newtheorem{theorem}[definition]{Theorem}
\newtheorem{proposition}[definition]{Proposition}
\newtheorem{example}[definition]{Example}
\newtheorem{corollary}[definition]{Corollary}
\newtheorem{lemma}[definition]{Lemma}
\newtheorem{conjecture}[definition]{Conjecture}
\begin{document}

\title{The first term of plethysms}
\author{K. Iijima}
\date{}
\maketitle

\begin{abstract}
Plethysm of two Schur functions can be expressed as a linear combination of 
Schur functions, and monomial symmetric functions.
In this paper, we express the coefficients combinatorially 
in the case of monomial symmetric functions. 
And by using it, we determine the first term of the plethysm 
with respect to Schur functions under the reverse lexicographic order. 

\end{abstract}


\section{Introduction}

Let $\lambda$ and $\mu$ be partitions of positive integers $m$ and $n$, respectively.
The plethysm $s_\lambda[s_\mu]$ is the symmetric function obtained by substituting 
the monomials in $s_\mu$ for the variables of $s_\lambda$.
D.E.Littlewood introduced this operation in 1936[9].

Plethysm of two Schur functions is expressed as a linear combination of Schur functions;
$$ s_\lambda[s_\mu]=\sum_{\nu \vdash mn}a_{\lambda[\mu]}^{\nu}s_\nu \,\,, $$
where $\nu \vdash mn$ means that $\nu$ is a partition of $mn$.

Plethysm appear in some fields, especially representation theory.
For example, the coefficient $a_{\lambda[\mu]}^{\nu}$ is equal to
the multiplicity of the irreducible $GL_N$-module of highest weight $\nu$ 
in a certain $GL_N$-module,
and the multiplicity of the irreducible $S_{mn}$-module type of $\nu$
in a certain $S_n \wr S_m$-module, where $GL_N, S_{mn}$, and $S_n \wr S_m$ are
the general linear group, the symmetric group, and the wreath product of the symmetric group
, respectively.
(See [10;Chapter.1 Appendix A and Appendix B].)
From these interpretations, we see that
each of the coefficient $a_{\lambda[\mu]}^{\nu}$ is a nonnegative integer.

One of most fundamental problems for plethysm is expressing the coefficients
$a_{\lambda[\mu]}^{\nu}$ combinatorially
like Kostka coefficients and Littlewood-Richardson coefficients.
In the case of $\lambda = (2)$ or $(1^2)$, Carr$\acute{e}$ and Leclerc [4] 
found a combinatorial description by using domino tableaux. 
However it is generally open problem.

In [2] Agaoka gave the table of $a_{\lambda[\mu]}^{\nu}$ up to $mn=16$,
and in [11] an another method for calculating $a_{\lambda[\mu]}^{\nu}$ is given.

Moreover plethysm is a necessary tool when we consider some geometric problems.
For example, in [1] Agaoka found a new obstruction of local isometric imbeddings of 
Riemannian submanifolds with codimension 2 by calculating the plethysm $s_3[s_{2,2}]$.

Now we explain our approach briefly.
Plethysm is also expressed as a sum of monomial symmetric functions;
$$ s_\lambda[s_\mu]=\sum_{\nu \vdash mn}Y_{\lambda[\mu]}^{\nu}m_\nu $$
where $m_\nu$ denotes the monomial symmetric function corresponding to partition $\nu$. 
We give a combinatorial description of 
the coefficients $Y_{\lambda[\mu]}^{\nu}$ (see section 3), 
and using this, we determine the {\it first} term of $s_\lambda[s_\mu]$ (see section 4).
(The first term is the maximal element among the partition $\nu$ satisfying 
$a_{\lambda[\mu]}^{\nu} \not= 0$ with respect to the reverse lexicographic order.)


\section{Preliminary}

\subsection{Young tableaux, symmetric functions}

We start with introducing the notations. 
For a positive integer $m$, let $[1,m] = \{ i \in \mathbb{Z} | 1 \le i \le m \}$ be the interval of integers between $1$ and $m$. 
For a positive integer $n$, a {\it partition} of $n$ is 
a non-increasing sequences of non-negative integers summing to $n$.
We write $\lambda \vdash n$ if $\lambda$ is a partition of size $n$.
And we use the same notation $\lambda$ to represent the Young diagram 
corresponding to $\lambda$.
Let $s_\lambda,m_\lambda,$ and $h_\lambda$ denote Schur function, 
monomial symmetric function, and complete symmetric function 
corresponding to $\lambda$, respectively.
Here we use $x_1,x_2,\cdots$ as variables.
And we define a symmetric bilinear form $\langle\,\,,\,\,\rangle$ 
on the ring of symmetric functions 
as follows; $$ \langle s_\lambda,s_\mu \rangle =\delta_{\lambda,\mu} .$$

Next we introduce notations for Young tableaux.
For a given Young diagram $\lambda$, a {\it Young tableau} (of shape $\mu$) is 
a map from the set of cells (in the Young diagram $\lambda$) 
to a totally ordered set $S$. 
For a given Young tableau $T$, the image of $(i,j)$ is denoted by $T(i,j)$ 
and called the $(i,j)$ {\it entry} of $T$. 
A {\it semi standard tableau} is a Young tableau whose entries increase weakly
along the rows and increase strictly down the columns.
For a Young diagram $\lambda$, $\mathrm{\mathrm{SSTab}}(\lambda,S)$ denotes the set of semi standard 
tableaux of shape $\lambda$. 

In particular we can take the set of positive integers as a totally ordered set $S$. 
In this case we write $\mathrm{SSTab}(\lambda , [1,m])$ simply $\mathrm{SSTab}(\lambda)_{\leq m}$. 
For a Young tableau $T$, the {\it weight} of $T$ is the sequence 
$\mathrm{wt}(T)=(\mu_1,\mu_2,\cdots)$, where $\mu_k$ is the number of $T(i,j)$ equal to $k$.
We denote by $\mathrm{SSTab}(\lambda;\mu)$ the set of semi standard tableaux 
of shape $\lambda$ with weight $\mu$. 
For a tableau $T \in \mathrm{SSTab}(\lambda;\mu)$, we define 
$x^{T} = x^{\mathrm{wt}(T)}=x_1^{\mu_1}x_2^{\mu_2}\cdots$.

Next we define a total order in $\mathrm{SSTab}(\lambda)$ which is used in section 3.
For a given semi standard tableau $T$, by reading $T$ from left to right in consecutive
rows, starting from the top to bottom, we obtain the {\it word} $\mathrm{word}(T)$.
We define a total order $>$ on the set of words (in which entry is a positive integer) 
as the lexicographic order.

\begin{definition}
Let $T,U \in \mathrm{SSTab}(\lambda)$.
We define $T>U$ if $\mathrm{word}(T)>\mathrm{word}(U)$. 
\label{order}
\end{definition}

\begin{example}
Let $T_1=$ 
\unitlength 0.1in
\begin{picture}(  6.3500,  4.0000)(  0.4000, -4.6000)
%
\special{pn 8}%
\special{pa 86 60}%
\special{pa 286 60}%
\special{pa 286 460}%
\special{pa 86 460}%
\special{pa 86 60}%
\special{fp}%
%
\special{pn 8}%
\special{pa 86 60}%
\special{pa 676 60}%
\special{pa 676 260}%
\special{pa 86 260}%
\special{pa 86 60}%
\special{fp}%
%
\special{pn 8}%
\special{pa 466 60}%
\special{pa 466 260}%
\special{fp}%
\put(1.7500,-1.6000){\makebox(0,0){$1$}}%
\put(3.7500,-1.5000){\makebox(0,0){$1$}}%
\put(5.6500,-1.6000){\makebox(0,0){$2$}}%
\put(1.7500,-3.8000){\makebox(0,0){$3$}}%
\end{picture}
\unitlength 0.1in
\begin{picture}(  6.3500,  4.0000)(  0.3000, -4.3000)
%
\special{pn 8}%
\special{pa 76 30}%
\special{pa 276 30}%
\special{pa 276 430}%
\special{pa 76 430}%
\special{pa 76 30}%
\special{fp}%
%
\special{pn 8}%
\special{pa 76 30}%
\special{pa 666 30}%
\special{pa 666 230}%
\special{pa 76 230}%
\special{pa 76 30}%
\special{fp}%
%
\special{pn 8}%
\special{pa 456 30}%
\special{pa 456 230}%
\special{fp}%
\put(1.6500,-1.3000){\makebox(0,0){$1$}}%
\put(3.6500,-1.2000){\makebox(0,0){$1$}}%
\put(5.5500,-1.3000){\makebox(0,0){$2$}}%
\put(1.6500,-3.5000){\makebox(0,0){$4$}}%
\end{picture}
\unitlength 0.1in
\begin{picture}(  6.3500,  4.0000)(  0.5000, -4.5000)
%
\special{pn 8}%
\special{pa 96 50}%
\special{pa 296 50}%
\special{pa 296 450}%
\special{pa 96 450}%
\special{pa 96 50}%
\special{fp}%
%
\special{pn 8}%
\special{pa 96 50}%
\special{pa 686 50}%
\special{pa 686 250}%
\special{pa 96 250}%
\special{pa 96 50}%
\special{fp}%
%
\special{pn 8}%
\special{pa 476 50}%
\special{pa 476 250}%
\special{fp}%
\put(1.8500,-1.5000){\makebox(0,0){$1$}}%
\put(3.8500,-1.4000){\makebox(0,0){$2$}}%
\put(5.7500,-1.5000){\makebox(0,0){$2$}}%
\put(1.8500,-3.7000){\makebox(0,0){$2$}}%
\end{picture}%
Then $\mathrm{word}(T_1)=1123,\, \mathrm{word}(T_2)=1124,\,\mathrm{word}(T_3)=1222$.
Thus $T_1<T_2<T_3$.
\end{example}

Now we recall well-known results for Kostka coefficients.

\begin{definition}
For $\lambda,\mu \vdash n$, the Kostka coefficient $K_{\lambda,\mu}$ is defined by  
$$ s_\lambda = \sum_{\mu \vdash n}K_{\lambda,\mu}m_\mu.$$
\par Similarly the inverse Kostka coefficient $K_{\lambda,\mu}^{-1}$ is given by 
$$ m_\lambda = \sum_{\mu \vdash n}K_{\lambda,\mu}^{-1}s_\mu.$$
\end{definition}

By a simple consideration, we have the following;

\begin{proposition}
$$ h_\lambda = \sum_{\mu \vdash n}K_{\lambda,\mu}s_\mu.$$
\end{proposition}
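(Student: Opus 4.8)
The plan is to deduce the expansion from the orthonormality of the Schur functions together with the classical duality between the complete homogeneous and the monomial symmetric functions. Because the bilinear form has been normalized so that $\langle s_\lambda, s_\mu\rangle = \delta_{\lambda,\mu}$, the Schur functions form an orthonormal basis of the space of homogeneous symmetric functions of degree $n$. Hence every symmetric function $f$ of degree $n$ satisfies $f = \sum_{\mu\vdash n}\langle f, s_\mu\rangle s_\mu$, and in particular $h_\lambda = \sum_{\mu\vdash n}\langle h_\lambda, s_\mu\rangle s_\mu$. Thus the whole problem reduces to identifying the numbers $\langle h_\lambda, s_\mu\rangle$ with Kostka coefficients.

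The key input I would use is that $\{h_\lambda\}$ and $\{m_\mu\}$ are dual bases for $\langle\,,\,\rangle$, that is, $\langle h_\lambda, m_\mu\rangle = \delta_{\lambda,\mu}$. This is a standard fact that can be read off from the Cauchy identity $\prod_{i,j}(1-x_iy_j)^{-1} = \sum_\lambda h_\lambda(x)m_\lambda(y) = \sum_\lambda s_\lambda(x)s_\lambda(y)$, the second equality being precisely the statement that the form with $\langle s_\lambda, s_\mu\rangle = \delta_{\lambda,\mu}$ renders both $(h,m)$ and $(s,s)$ dual pairs (see [10, I.4]). Granting this, I would substitute the defining expansion $s_\mu = \sum_{\nu\vdash n} K_{\mu,\nu} m_\nu$ and compute, by bilinearity,
\[
\langle h_\lambda, s_\mu\rangle = \Big\langle h_\lambda, \sum_{\nu} K_{\mu,\nu} m_\nu\Big\rangle = \sum_{\nu} K_{\mu,\nu}\langle h_\lambda, m_\nu\rangle = \sum_\nu K_{\mu,\nu}\delta_{\lambda,\nu} = K_{\mu,\lambda}.
\]
Substituting this back into $h_\lambda = \sum_\mu \langle h_\lambda, s_\mu\rangle s_\mu$ then yields the desired expansion of $h_\lambda$ in Schur functions with Kostka coefficients.

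The computation itself is routine, so in keeping with the author's remark that the result holds ``by a simple consideration'' there is no serious obstacle; the one point that genuinely requires care is the order of the two indices on $K$. Since the Kostka matrix is merely unitriangular for the dominance order and is not symmetric (for instance $K_{(2),(1^2)}=1$ while $K_{(1^2),(2)}=0$), one must keep track of which index records the shape and which records the weight. With the convention $s_\lambda = \sum_\mu K_{\lambda,\mu} m_\mu$ fixed in the paper, my computation returns the coefficient of $s_\mu$ in $h_\lambda$ as the Kostka number whose \emph{shape} index is $\mu$ and whose \emph{weight} index is $\lambda$; I would verify the final statement against the small case $h_{(1^2)} = s_{(2)} + s_{(1^2)}$ to confirm that the two subscripts are recorded in the intended order.
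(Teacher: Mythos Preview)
Your argument is correct and is exactly the standard one: use the $(h_\lambda,m_\mu)$ duality together with the defining expansion $s_\mu=\sum_\nu K_{\mu,\nu}m_\nu$ to compute $\langle h_\lambda,s_\mu\rangle=K_{\mu,\lambda}$, and then expand $h_\lambda$ in the orthonormal Schur basis. The paper gives no proof beyond the phrase ``by a simple consideration,'' so there is nothing further to compare at the level of method.

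Your caution about the order of the indices is well placed and in fact catches a typo in the paper. Your computation yields
\[
h_\lambda=\sum_{\mu\vdash n}K_{\mu,\lambda}\,s_\mu,
\]
with shape index $\mu$ and weight index $\lambda$, whereas the proposition as printed has $K_{\lambda,\mu}$. Your test case $h_{(1^2)}=s_{(2)}+s_{(1^2)}$ already rules out the printed version, since $K_{(1^2),(2)}=0$. More to the point, the only place the paper uses this proposition is in the proof of Theorem~4.2, where it inverts the relation to write $s_\nu=\sum_\kappa K^{-1}_{\kappa,\nu}h_\kappa$; that inversion is exactly what one gets from $h=K^{T}s$, i.e.\ from your formula, not from the one displayed in the proposition. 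So the paper's subsequent argument is consistent with the result you proved, and the discrepancy is purely typographical.
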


Next theorem supply us with a combinatorially expression of Kostka coefficients.

\begin{theorem}
Let $\lambda,\mu \vdash n$, then we have 
$$ K_{\lambda,\mu}= \#\mathrm{SSTab}(\lambda;\mu) .$$
\end{theorem}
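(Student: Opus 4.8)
The plan is to start from the combinatorial (tableau) expression of the Schur function and read off the coefficient of each monomial. Recall that the Schur function admits the expansion
$$ s_\lambda = \sum_{T \in \mathrm{SSTab}(\lambda)} x^T, $$
the sum ranging over all semi standard tableaux of shape $\lambda$ with entries in the positive integers, where $x^T = x^{\mathrm{wt}(T)}$ as defined above. If one takes this as the definition of $s_\lambda$, then the identity below is exactly what has to be checked; if one starts instead from the bialternant or from the normalization $\langle s_\lambda, s_\mu\rangle = \delta_{\lambda,\mu}$, this tableau expansion is the standard first step, provable via the Lindstr\"{o}m--Gessel--Viennot lemma or the Jacobi--Trudi identity. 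Granting it, I would collect terms according to weight: writing $\alpha$ for a composition (a rearrangement of some partition),
$$ s_\lambda = \sum_{\alpha} \#\{ T \in \mathrm{SSTab}(\lambda) : \mathrm{wt}(T) = \alpha \}\, x^\alpha. $$

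First I would establish that the count $\#\{T : \mathrm{wt}(T) = \alpha\}$ depends only on the partition obtained by sorting $\alpha$. This is precisely the symmetry of $s_\lambda$, and the combinatorial tool is the Bender--Knuth involution: for each $i$ there is a weight-rearranging bijection on $\mathrm{SSTab}(\lambda)$ that interchanges the number of entries equal to $i$ with the number equal to $i+1$, leaving all other entry-counts fixed. Since the adjacent transpositions generate the symmetric group acting on the coordinates of $\alpha$, iterating these involutions yields, for every rearrangement $\alpha$ of a fixed partition $\mu$, a bijection giving
$$ \#\{T : \mathrm{wt}(T) = \alpha\} = \#\{T : \mathrm{wt}(T) = \mu\} = \#\mathrm{SSTab}(\lambda;\mu). $$

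Next I would regroup the expansion by partition type. Fixing $\mu \vdash n$, all rearrangements $\alpha$ of $\mu$ contribute the same coefficient $\#\mathrm{SSTab}(\lambda;\mu)$, and the sum of the corresponding monomials $x^\alpha$ is by definition the monomial symmetric function $m_\mu$. Hence
$$ s_\lambda = \sum_{\mu \vdash n} \#\mathrm{SSTab}(\lambda;\mu)\, m_\mu. $$
Comparing this with the defining expansion $s_\lambda = \sum_{\mu \vdash n} K_{\lambda,\mu}\, m_\mu$ and invoking the linear independence of $\{m_\mu\}_{\mu \vdash n}$, I would conclude $K_{\lambda,\mu} = \#\mathrm{SSTab}(\lambda;\mu)$.

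The main obstacle is the symmetry step: verifying that the tableau count is invariant under permuting the weight. Everything else is bookkeeping, since the passage from the tableau expansion to the monomial expansion and the final coefficient comparison are routine once symmetry is in hand. If $s_\lambda$ has already been introduced as a symmetric function (for instance through the bialternant), then the only genuine content is the tableau formula $s_\lambda = \sum_T x^T$ itself, and the Bender--Knuth argument may be dispensed with.
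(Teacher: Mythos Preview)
Your argument is correct and is essentially the standard textbook proof: take the tableau expansion of $s_\lambda$, use Bender--Knuth involutions (or the already-known symmetry of $s_\lambda$) to see that the multiplicity of $x^\alpha$ depends only on the partition rearrangement $\mu$ of $\alpha$, regroup into monomial symmetric functions, and match coefficients against the defining expansion $s_\lambda=\sum_\mu K_{\lambda,\mu}m_\mu$.

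There is, however, nothing in the paper to compare it with: the paper simply \emph{quotes} this theorem as a well-known fact and gives no proof at all. It is stated in the preliminary section as background (``Next theorem supply us with a combinatorially expression of Kostka coefficients'') and is used thereafter via its corollaries. So your write-up is not an alternative to the paper's proof but a self-contained justification of a result the paper takes for granted. If anything, your discussion is slightly more than what is needed here, since the paper already assumes the identity $s_\lambda=\sum_{T}x^T$ as Corollary~2.7 (derived from this very theorem by restricting variables), so in the paper's logical flow the tableau formula is downstream of the Kostka identity rather than a tool for proving it.
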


$\bf{Remark.}$ 
For $K_{\lambda,\mu}^{-1}$ we also have a combinatorially expression.[5]

\vspace{1em}
From this theorem we have some corollaries which we will use later.

\begin{corollary}
We introduce a total order on the set of Young diagrams 
by the reverse lexicographic order.$($see $[10])$
Then for $\lambda,\mu \vdash n$, 
$$\mathrm{(i)} \, \text{ If } \, \lambda < \mu \,,\, \text{ then } \,\,K_{\lambda,\mu}=0 \,.$$
$$\mathrm{(ii)} \, \text{ If } \,\lambda < \mu \,,\, \text{ then } \,\,K_{\lambda,\mu}^{-1}=0 \,.$$
\end{corollary}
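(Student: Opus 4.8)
The plan is to treat (i) via the combinatorial description of Kostka coefficients just established, and then to derive (ii) from (i) by a formal triangularity argument.

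For (i), I would argue by contraposition: it suffices to show that $K_{\lambda,\mu}\neq 0$ forces $\lambda \geq \mu$ in the reverse lexicographic order. By the preceding Theorem, $K_{\lambda,\mu}\neq 0$ means there exists a semistandard tableau $T\in\mathrm{SSTab}(\lambda;\mu)$. The key observation is that in any such $T$ the entries that are $\leq k$ can occur only in the first $k$ rows: since columns increase strictly, a cell containing a value $\leq k$ lying in row $r$ has $r-1$ strictly smaller positive entries above it in its column, so $r\leq k$. Counting, the $\mu_1+\cdots+\mu_k$ cells with entries $\leq k$ all fit inside the first $k$ rows of $\lambda$, which contain $\lambda_1+\cdots+\lambda_k$ cells; hence $\sum_{i=1}^{k}\mu_i \leq \sum_{i=1}^{k}\lambda_i$ for every $k$. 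Thus $K_{\lambda,\mu}\neq 0$ implies that $\lambda$ dominates $\mu$.

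It then remains to recall that the dominance order refines to the order used in the statement (this compatibility is precisely why the reverse lexicographic order is the right tool here): if $\lambda$ dominates $\mu$ and $\lambda\neq\mu$, let $i$ be the first index with $\lambda_i\neq\mu_i$; the equality of the partial sums up to $i-1$ together with $\sum_{j\le i}\mu_j\le\sum_{j\le i}\lambda_j$ forces $\mu_i<\lambda_i$, which is exactly $\lambda>\mu$ in the reverse lexicographic order. Combining, $K_{\lambda,\mu}\neq 0 \Rightarrow \lambda\ge\mu$, whose contrapositive is precisely (i).

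For (ii), I would use triangular matrix inversion. Fix $n$ and regard $K=(K_{\lambda,\mu})_{\lambda,\mu\vdash n}$ as a matrix whose rows and columns are indexed by partitions in the reverse lexicographic order. By (i) we have $K_{\lambda,\mu}=0$ whenever $\lambda<\mu$, while the unique semistandard filling of shape $\lambda$ with weight $\lambda$ (row $i$ filled with the value $i$) gives $K_{\lambda,\lambda}=1$; hence $K$ is unitriangular with respect to this total order. The inverse of a unitriangular matrix is again unitriangular with the same support, so $(K^{-1}_{\lambda,\mu})$ vanishes whenever $\lambda<\mu$, which is (ii). Equivalently, one may prove (ii) directly by induction on the order using the orthogonality relation $\sum_{\nu} K^{-1}_{\lambda,\nu}K_{\nu,\mu}=\delta_{\lambda,\mu}$. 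The formalities in (ii) are routine; the only genuine content lies in the tableau count of (i), and the single point that must be handled with care is the compatibility of the dominance order with the reverse lexicographic order, after which everything is bookkeeping.
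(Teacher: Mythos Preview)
Your proof is correct and follows the standard route. The paper does not actually supply a proof of this corollary; it merely records it as an immediate consequence of the combinatorial description $K_{\lambda,\mu}=\#\mathrm{SSTab}(\lambda;\mu)$ (Theorem~2.5), leaving the details to the reader. Your argument fills in precisely those details: the dominance bound from the column-strict condition for~(i), the compatibility of dominance with the reverse lexicographic order, and the unitriangularity of the Kostka matrix for~(ii). Nothing is missing, and this is exactly the kind of verification the paper's phrasing invites.
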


For a positive integer $m$, by putting $x_{m+1}=x_{m+2}=\cdots=0$ in the theorem, 
we have the next corollary.

\begin{corollary}
For a Schur function with $m$ variables $s_\lambda(x_1,\cdots,x_m)$, 
we have $$ s_\lambda(x_1,\cdots,x_m)=\sum_{T \in \mathrm{SSTab}(\mu)_{\leq m}}x^{T}. $$
\end{corollary}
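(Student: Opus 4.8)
The plan is to deduce the finite-variable monomial expansion directly from the two facts already established: the definition $s_\lambda = \sum_{\mu \vdash n} K_{\lambda,\mu}\, m_\mu$ and the combinatorial formula $K_{\lambda,\mu} = \#\mathrm{SSTab}(\lambda;\mu)$ of the preceding theorem. Combining them gives the identity $s_\lambda = \sum_{\mu \vdash n} \#\mathrm{SSTab}(\lambda;\mu)\, m_\mu$ in infinitely many variables, and the corollary will follow by specializing $x_{m+1} = x_{m+2} = \cdots = 0$ and comparing the coefficient of each monomial on the two sides. (Here the shape in the summation index of the statement should read $\lambda$, not $\mu$.)

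First I would recall that, by definition, $m_\mu = \sum_\alpha x^\alpha$, the sum running over the distinct rearrangements $\alpha$ of the parts of $\mu$, each occurring with coefficient $1$. After the specialization $x_{m+1} = x_{m+2} = \cdots = 0$, a monomial $x^\alpha$ survives precisely when its support is contained in $[1,m]$; that is, $m_\mu(x_1, \ldots, x_m)$ is the sum of $x^\alpha$ over those rearrangements $\alpha = (\alpha_1, \ldots, \alpha_m)$ of $\mu$ with at most $m$ nonzero parts. Consequently, for a fixed composition $\alpha$ of $n$ supported on $[1,m]$, the monomial $x^\alpha$ occurs on the left-hand side only inside the single term $m_{\mu}$ with $\mu = \mathrm{sort}(\alpha)$ (the decreasing rearrangement of $\alpha$), and its coefficient there is $\#\mathrm{SSTab}(\lambda;\mu)$. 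On the other hand, the proposed right-hand side $\sum_{T \in \mathrm{SSTab}(\lambda)_{\le m}} x^T$ assigns to $x^\alpha$ the coefficient $\#\{T \in \mathrm{SSTab}(\lambda)_{\le m} : \mathrm{wt}(T) = \alpha\}$.

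Thus the corollary is equivalent to the equality
$$\#\{T \in \mathrm{SSTab}(\lambda)_{\le m} : \mathrm{wt}(T) = \alpha\} = \#\mathrm{SSTab}\big(\lambda; \mathrm{sort}(\alpha)\big)$$
for every composition $\alpha$ supported on $[1,m]$. The main obstacle is exactly this last equality: the number of semistandard tableaux of shape $\lambda$ with a prescribed content must depend only on the multiset of values of that content, i.e. only on $\mathrm{sort}(\alpha)$ and not on the order in which the values appear.

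I would establish it by the Bender--Knuth involution: for each $i$ there is a weight-changing bijection on $\mathrm{SSTab}(\lambda)$ that interchanges the number of entries equal to $i$ and to $i+1$ while leaving all other entry-counts fixed. Since adjacent transpositions generate the full symmetric group acting on contents, iterating these involutions produces a bijection between tableaux of content $\alpha$ and tableaux of content $\mathrm{sort}(\alpha)$, which proves the displayed equality and hence the corollary. Equivalently, within the present framework one may simply invoke that $s_\lambda$ is a symmetric function, so that the coefficient of $x^\alpha$ in $s_\lambda(x_1, \ldots, x_m)$ is unchanged under permuting the variables and therefore equals the coefficient of $x^{\mathrm{sort}(\alpha)}$, namely $K_{\lambda,\mathrm{sort}(\alpha)} = \#\mathrm{SSTab}(\lambda;\mathrm{sort}(\alpha))$ by the theorem.
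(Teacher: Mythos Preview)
Your argument is correct and matches the paper's approach: the paper merely remarks that the corollary follows from the theorem by putting $x_{m+1}=x_{m+2}=\cdots=0$, and you have spelled out this specialization carefully, supplying the Bender--Knuth step that the paper leaves implicit. One caveat: your closing ``equivalently'' clause, appealing only to the symmetry of $s_\lambda$, pins down the coefficient of $x^\alpha$ on the Schur-function side but says nothing about the tableau side $\sum_T x^T$, so on its own it does not replace the Bender--Knuth bijection; it is the symmetry of the tableau generating function, not of $s_\lambda$, that is at issue in the displayed equality.
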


\subsection{plethysm}

Let $f$ and $g$ be two symmetric functions and write $g$ as a sum of monomials:
$g=\sum_{\alpha \in \mathbb{N}^{\infty}} c_{\alpha} x^{\alpha}$. 
Introduce the set of fictitious variables $y_i$ defined by 
$$ \Pi (1+y_i t) = \Pi_{\alpha \in \mathbb{N}^{\infty}} (1+x^{\alpha} t)^{c_{\alpha}} $$
and define $f[g] = f(y_1,y_2,\cdots)$. 
If $f$ is $n$-th symmetric function and $g$ is $m$-th, then $f[g]$ is $nm$-th symmetric function. 
We call this multiple on the set of symmetric functions {\it plethysm}. 

\begin{proposition}
Let $f$ and $g$ be two symmetric functions.
We restrict $g$ to $s$-variables and write it as a sum of monomials:
$$g(x_1,\cdots,x_s,0,0,\cdots)=\sum_{i=1}^N x^{\alpha(i)} .$$ 
Then, 
$$f[g](x_1,\cdots,x_s,0,0,\cdots)= f(x^{\alpha(1)} , \cdots , x^{\alpha(N)} , 0 , 0 ,\cdots ).$$ 
That is, $f[g](x_1,\cdots,x_s,0,0,\cdots)$ is the symmetric polynomial obtained by 
	substituting monomials in $f$ (together with multiplicity) for the variables in $g$. 
\label{prop-plethysm}
\end{proposition}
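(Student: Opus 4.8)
The plan is to reduce everything to the elementary symmetric functions, since the fictitious variables $y_i$ are only defined through them. First I would invoke the fundamental theorem of symmetric functions to write $f$ as a polynomial $f=P(e_1,e_2,\ldots)$ in the elementary symmetric functions, so that by definition $f[g]=P(e_1(y),e_2(y),\ldots)$ where the $e_k(y)$ are read off from the defining identity $\prod_i(1+y_i t)=\prod_{\alpha}(1+x^{\alpha}t)^{c_{\alpha}}$ as $e_k(y)=[t^k]\prod_{\alpha}(1+x^{\alpha}t)^{c_{\alpha}}$. The point of this reformulation is that each $e_k(y)$ is now an honest series in the original variables $x_j$, so $f[g]$ is a genuine symmetric function in the $x_j$, and the restriction $x_{s+1}=x_{s+2}=\cdots=0$ is a ring homomorphism that I can apply term by term.

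Second, I would carry out the restriction inside the product. Setting $x_{s+1}=x_{s+2}=\cdots=0$ sends a monomial $x^{\alpha}$ to $0$ whenever the support of $\alpha$ meets $\{s+1,s+2,\ldots\}$ and leaves it unchanged otherwise; hence the factor $(1+x^{\alpha}t)^{c_{\alpha}}$ collapses to $1$ in the former case and survives in the latter. By the hypothesis $g(x_1,\ldots,x_s,0,\ldots)=\sum_{i=1}^N x^{\alpha(i)}$, the surviving monomials, counted with their multiplicities $c_{\alpha}$, are exactly $x^{\alpha(1)},\ldots,x^{\alpha(N)}$, so the restricted product is $\prod_{i=1}^N(1+x^{\alpha(i)}t)$. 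Reading off the coefficient of $t^k$ then yields $e_k(y)\big|_{x_{s+1}=\cdots=0}=e_k(x^{\alpha(1)},\ldots,x^{\alpha(N)})$.

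Finally, I would feed this back into $P$: since restriction commutes with the polynomial $P$, we get $f[g]\big|_{x_{s+1}=\cdots=0}=P\bigl(e_1(x^{\alpha(i)}),e_2(x^{\alpha(i)}),\ldots\bigr)=f(x^{\alpha(1)},\ldots,x^{\alpha(N)})$, and appending trailing zeros to the argument list changes nothing because a symmetric function ignores zero variables. This is exactly the asserted identity.

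The point that needs the most care is the legitimacy of ``restricting the $y_i$''. The $y_i$ are not individually defined, only their symmetric functions are, so one cannot naively substitute into them; routing everything through the $e_k(y)$, which \emph{are} well-defined series in the $x_j$, is what makes the term-by-term restriction valid. A secondary point to verify is finiteness: because $g$ is homogeneous of fixed degree, in $s$ variables it has only finitely many monomials, so $N$ is finite and $\prod_{i=1}^N(1+x^{\alpha(i)}t)$ is a genuine finite product, keeping the coefficient extraction unambiguous.
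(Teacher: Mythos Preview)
The paper does not actually prove this proposition; it is stated as a known fact about plethysm (the substitution interpretation that follows directly from the fictitious-variable definition) and no argument is given. So there is nothing in the paper to compare your approach against.

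Your argument is correct and is the standard way to justify this. Routing through the elementary symmetric functions is exactly the right move, for the reason you identify: the individual $y_i$ are not well-defined, but each $e_k(y)$ is, being the coefficient of $t^k$ in $\prod_\alpha(1+x^\alpha t)^{c_\alpha}$, and this is what lets you apply the restriction homomorphism $x_{s+1}=x_{s+2}=\cdots=0$ legitimately. The collapse of the product to $\prod_{i=1}^N(1+x^{\alpha(i)}t)$ under restriction is clear, and the final reassembly via $P$ is immediate.

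One small remark on your last paragraph: you do not need $g$ to be homogeneous to conclude that $N$ is finite. Any element of the ring of symmetric functions is a finite sum of homogeneous pieces, so its restriction to $s$ variables is an honest polynomial and hence has only finitely many monomials. The homogeneity assumption is harmless but unnecessary.
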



\section{The expression of plethysm in monomial symmetric functions}

Let $\lambda \vdash m$, $\mu \vdash n$ and $\nu \vdash mn$. 
We put a copy the Young diagram $\mu$ in each cell of the Young diagram $\lambda$, 
and denote such a diagram by $\lambda[\mu]$.
For example, if $\lambda=(3,1)$ and $\mu=(3,2)$, we consider the following diagram.
(Fig.1)

\begin{center}
\unitlength 0.1in
\begin{picture}( 30.0000, 16.0000)(  6.1000,-18.0000)
%
\special{pn 13}%
\special{pa 610 1800}%
\special{pa 1610 1800}%
\special{pa 1610 1000}%
\special{pa 610 1000}%
\special{pa 610 1800}%
\special{fp}%
%
\special{pn 13}%
\special{pa 1610 1000}%
\special{pa 610 1000}%
\special{pa 610 200}%
\special{pa 1610 200}%
\special{pa 1610 1000}%
\special{fp}%
%
\special{pn 13}%
\special{pa 1610 1000}%
\special{pa 3610 1000}%
\special{pa 3610 200}%
\special{pa 1610 200}%
\special{pa 1610 1000}%
\special{fp}%
%
\special{pn 8}%
\special{pa 800 800}%
\special{pa 1200 800}%
\special{pa 1200 400}%
\special{pa 800 400}%
\special{pa 800 800}%
\special{fp}%
%
\special{pn 8}%
\special{pa 800 600}%
\special{pa 1400 600}%
\special{fp}%
\special{pa 1400 600}%
\special{pa 1400 400}%
\special{fp}%
\special{pa 1400 400}%
\special{pa 1200 400}%
\special{fp}%
\special{pa 1000 790}%
\special{pa 1000 390}%
\special{fp}%
%
\special{pn 8}%
\special{pa 800 1620}%
\special{pa 1200 1620}%
\special{pa 1200 1220}%
\special{pa 800 1220}%
\special{pa 800 1620}%
\special{fp}%
%
\special{pn 8}%
\special{pa 800 1420}%
\special{pa 1400 1420}%
\special{fp}%
\special{pa 1400 1420}%
\special{pa 1400 1220}%
\special{fp}%
\special{pa 1400 1220}%
\special{pa 1200 1220}%
\special{fp}%
\special{pa 1000 1610}%
\special{pa 1000 1210}%
\special{fp}%
%
\special{pn 8}%
\special{pa 1810 790}%
\special{pa 2210 790}%
\special{pa 2210 390}%
\special{pa 1810 390}%
\special{pa 1810 790}%
\special{fp}%
%
\special{pn 8}%
\special{pa 1810 590}%
\special{pa 2410 590}%
\special{fp}%
\special{pa 2410 590}%
\special{pa 2410 390}%
\special{fp}%
\special{pa 2410 390}%
\special{pa 2210 390}%
\special{fp}%
\special{pa 2010 780}%
\special{pa 2010 380}%
\special{fp}%
%
\special{pn 8}%
\special{pa 2790 810}%
\special{pa 3190 810}%
\special{pa 3190 410}%
\special{pa 2790 410}%
\special{pa 2790 810}%
\special{fp}%
%
\special{pn 8}%
\special{pa 2790 610}%
\special{pa 3390 610}%
\special{fp}%
\special{pa 3390 610}%
\special{pa 3390 410}%
\special{fp}%
\special{pa 3390 410}%
\special{pa 3190 410}%
\special{fp}%
\special{pa 2990 800}%
\special{pa 2990 400}%
\special{fp}%
%
\special{pn 13}%
\special{pa 2610 1000}%
\special{pa 2610 200}%
\special{fp}%
\end{picture}%
\par Fig.1
\end{center}

\begin{definition}
A semi standard tableau of shape $\lambda[\mu]$ is a semi standard tableau 
$T:\lambda \rightarrow \mathrm{SSTab}(\mu)$ in the sense of Definition \ref{order}. 
Namely it is filled with $mn$ number of positive integers and
it satisfies following two conditions;
\par $(i)$. Each Young tableau of shape $\mu$ is a semi standard tableau.
\par $(ii)$. These $m$ number of semi standard tableaux form a semi standard tableau
of shape $\lambda$ with respect to the totally order in Definition \ref{order}.

Moreover $\mathrm{SSTab}(\lambda[\mu])$ denotes the set of semi standard tableaux of shape 
$\lambda[\mu]$.
\end{definition}

\begin{definition}
For given $T \in \mathrm{SSTab}(\lambda[\mu])$ we define the weight $\mathrm{wt}(T)$ as usual, 
i.e. $\mathrm{wt}(T)=(\nu_1,\nu_2,\cdots)$, where $\nu_k$ is the number of entries equal to $k$.
For $\lambda \vdash m,\mu \vdash n$ and $\nu \vdash mn$, we put 
$$ Y_{\lambda[\mu]}^{\nu}:=\#\mathrm{SSTab}(\lambda[\mu];\nu) .$$
\end{definition}

\begin{example}
Set $\lambda =(2),\mu =(2)$ and $\nu =(2,1,1)$.
Then the Young tableaux of shape $(2)[(2)]$ with weight $(2,1,1)$ are as follows;

\smallskip
{ \rm 
\unitlength 0.1in
\begin{picture}( 38.0000,  6.0000)(  4.0000, -8.0000)
%
\special{pn 20}%
\special{pa 400 200}%
\special{pa 400 800}%
\special{fp}%
\special{pa 400 800}%
\special{pa 2000 800}%
\special{fp}%
\special{pa 2000 800}%
\special{pa 2000 200}%
\special{fp}%
\special{pa 2000 200}%
\special{pa 400 200}%
\special{fp}%
\special{pa 1200 200}%
\special{pa 1200 800}%
\special{fp}%
\special{pa 2600 200}%
\special{pa 2600 800}%
\special{fp}%
\special{pa 2600 800}%
\special{pa 4200 800}%
\special{fp}%
\special{pa 4200 200}%
\special{pa 4200 800}%
\special{fp}%
\special{pa 4200 200}%
\special{pa 2600 200}%
\special{fp}%
\special{pa 3400 200}%
\special{pa 3400 800}%
\special{fp}%
\put(6.6000,-6.0000){\makebox(0,0)[lb]{1}}%
\put(8.6000,-6.0000){\makebox(0,0)[lb]{1}}%
\put(30.6000,-6.0000){\makebox(0,0)[lb]{2}}%
%
\special{pn 8}%
\special{pa 500 310}%
\special{pa 1100 310}%
\special{fp}%
\special{pa 1100 310}%
\special{pa 1100 710}%
\special{fp}%
\special{pa 1100 710}%
\special{pa 500 710}%
\special{fp}%
\special{pa 500 710}%
\special{pa 500 310}%
\special{fp}%
\special{pa 1300 310}%
\special{pa 1900 310}%
\special{fp}%
\special{pa 1900 310}%
\special{pa 1900 710}%
\special{fp}%
\special{pa 1900 710}%
\special{pa 1300 710}%
\special{fp}%
\special{pa 1300 710}%
\special{pa 1300 310}%
\special{fp}%
\special{pa 2700 310}%
\special{pa 3300 310}%
\special{fp}%
\special{pa 3300 310}%
\special{pa 3300 710}%
\special{fp}%
\special{pa 3300 710}%
\special{pa 2700 710}%
\special{fp}%
\special{pa 2700 710}%
\special{pa 2700 310}%
\special{fp}%
\special{pa 3500 310}%
\special{pa 4100 310}%
\special{fp}%
\special{pa 4100 710}%
\special{pa 4100 710}%
\special{fp}%
\special{pa 3500 710}%
\special{pa 3500 310}%
\special{fp}%
\special{pa 3500 710}%
\special{pa 4100 710}%
\special{fp}%
\special{pa 4100 710}%
\special{pa 4100 310}%
\special{fp}%
\special{pa 780 310}%
\special{pa 780 310}%
\special{fp}%
%
\special{pn 8}%
\special{pa 790 310}%
\special{pa 790 710}%
\special{fp}%
\special{pa 1590 710}%
\special{pa 1590 310}%
\special{fp}%
\special{pa 2990 310}%
\special{pa 2990 710}%
\special{fp}%
\special{pa 3790 710}%
\special{pa 3790 310}%
\special{fp}%
\put(39.1000,-5.3000){\makebox(0,0){3}}%
\put(36.7000,-5.3000){\makebox(0,0){1}}%
\put(28.5000,-5.3000){\makebox(0,0){1}}%
\put(14.5000,-5.2000){\makebox(0,0){2}}%
\put(17.2000,-5.2000){\makebox(0,0){3}}%
\end{picture}

Hence we have $Y_{(2)[(2)]}^{(2,1^2)}=2$ .
\end{example}

\begin{example}
Set $\lambda =(2,1),\mu =(1^2)$ and $\nu =(3,1^3)$.
Then the Young tableaux of shape $(2,1)[(1^2)]$ with weight $(3,1^3)$ are as follows;

\smallskip
{ \rm 
\unitlength 0.1in
\begin{picture}( 40.0000, 16.1000)( 12.0000,-18.1000)
%
\special{pn 20}%
\special{pa 1200 200}%
\special{pa 1200 200}%
\special{fp}%
\special{pa 1200 200}%
\special{pa 1200 1800}%
\special{fp}%
\special{pa 1200 210}%
\special{pa 2000 210}%
\special{fp}%
\special{pa 2000 210}%
\special{pa 2000 1810}%
\special{fp}%
\special{pa 2000 1810}%
\special{pa 1200 1810}%
\special{fp}%
\special{pa 1200 1010}%
\special{pa 2800 1010}%
\special{fp}%
\special{pa 2800 1010}%
\special{pa 2800 210}%
\special{fp}%
\special{pa 2800 210}%
\special{pa 2000 210}%
\special{fp}%
\special{pa 3600 210}%
\special{pa 3600 1810}%
\special{fp}%
\special{pa 3600 1810}%
\special{pa 4400 1810}%
\special{fp}%
\special{pa 4400 1810}%
\special{pa 4400 210}%
\special{fp}%
\special{pa 4400 210}%
\special{pa 3600 210}%
\special{fp}%
\special{pa 4400 210}%
\special{pa 5200 210}%
\special{fp}%
\special{pa 5200 1010}%
\special{pa 3600 1010}%
\special{fp}%
\special{pa 5200 1010}%
\special{pa 5200 210}%
\special{fp}%
%
\special{pn 8}%
\special{pa 1410 300}%
\special{pa 1410 900}%
\special{fp}%
\special{pa 1410 900}%
\special{pa 1810 900}%
\special{fp}%
\special{pa 1810 300}%
\special{pa 1810 900}%
\special{fp}%
\special{pa 1810 300}%
\special{pa 1410 300}%
\special{fp}%
\special{pa 2210 300}%
\special{pa 2610 300}%
\special{fp}%
\special{pa 2610 300}%
\special{pa 2610 900}%
\special{fp}%
\special{pa 2610 900}%
\special{pa 2210 900}%
\special{fp}%
\special{pa 2210 900}%
\special{pa 2210 300}%
\special{fp}%
\special{pa 2210 300}%
\special{pa 2210 300}%
\special{fp}%
\special{pa 1410 1100}%
\special{pa 1810 1100}%
\special{fp}%
\special{pa 1810 1700}%
\special{pa 1810 1100}%
\special{fp}%
\special{pa 1410 1100}%
\special{pa 1410 1700}%
\special{fp}%
\special{pa 1410 1700}%
\special{pa 1810 1700}%
\special{fp}%
\special{pa 3810 300}%
\special{pa 4210 300}%
\special{fp}%
\special{pa 4210 300}%
\special{pa 4210 900}%
\special{fp}%
\special{pa 4210 900}%
\special{pa 3810 900}%
\special{fp}%
\special{pa 3810 900}%
\special{pa 3810 300}%
\special{fp}%
\special{pa 3810 1100}%
\special{pa 4210 1100}%
\special{fp}%
\special{pa 4210 1700}%
\special{pa 4210 1100}%
\special{fp}%
\special{pa 4210 1700}%
\special{pa 3810 1700}%
\special{fp}%
\special{pa 3810 1700}%
\special{pa 3810 1100}%
\special{fp}%
\special{pa 4610 900}%
\special{pa 5010 900}%
\special{fp}%
\special{pa 5010 900}%
\special{pa 5010 300}%
\special{fp}%
\special{pa 5010 300}%
\special{pa 4610 300}%
\special{fp}%
\special{pa 4610 300}%
\special{pa 4610 900}%
\special{fp}%
%
\special{pn 8}%
\special{pa 1410 600}%
\special{pa 1810 600}%
\special{fp}%
\special{pa 2210 600}%
\special{pa 2610 600}%
\special{fp}%
\special{pa 1810 1400}%
\special{pa 1410 1400}%
\special{fp}%
\special{pa 3810 1400}%
\special{pa 4210 1400}%
\special{fp}%
\special{pa 4210 600}%
\special{pa 3810 600}%
\special{fp}%
\special{pa 4610 600}%
\special{pa 5010 600}%
\special{fp}%
\put(16.0000,-4.5000){\makebox(0,0){1}}%
\put(16.0000,-12.5000){\makebox(0,0){1}}%
\put(24.1000,-4.5000){\makebox(0,0){1}}%
\put(40.0000,-4.6000){\makebox(0,0){1}}%
\put(40.1000,-12.7000){\makebox(0,0){1}}%
\put(48.0000,-4.7000){\makebox(0,0){1}}%
\put(16.0000,-7.6000){\makebox(0,0){2}}%
\put(24.1000,-7.7000){\makebox(0,0){3}}%
\put(16.0000,-15.6000){\makebox(0,0){4}}%
\put(40.1000,-7.5000){\makebox(0,0){2}}%
\put(40.1000,-15.6000){\makebox(0,0){3}}%
\put(48.0000,-7.7000){\makebox(0,0){4}}%
\end{picture}

Hence we have $Y_{(2,1)[(1^2)]}^{(3,1^3)}=2$ .
\end{example}

Now we prove the first main result in this paper.

\begin{theorem}
Let $\lambda \vdash m,\mu \vdash n$ and $\nu \vdash mn$.
Then $Y_{\lambda[\mu]}^{\nu}$ is equal to the coefficient of $m_\nu$ in the expansion of 
$s_\lambda[s_\mu]$ in terms of monomial symmetric functions.
\par In other words, we have 
$$ s_\lambda[s_\mu]= \sum_{\nu \vdash mn}Y_{\lambda[\mu]}^{\nu} \, m_\nu .$$
\end{theorem}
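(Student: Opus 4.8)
The plan is to compute the restriction of $s_\lambda[s_\mu]$ to finitely many variables combinatorially, and then recognize the answer as a sum of monomial symmetric functions. Fix a positive integer $s$ and restrict to $x_1,\dots,x_s$. By the combinatorial formula for Schur functions, $s_\mu(x_1,\dots,x_s)=\sum_{U\in\mathrm{SSTab}(\mu)_{\leq s}}x^U$, so writing $N=\#\mathrm{SSTab}(\mu)_{\leq s}$ and enumerating $\mathrm{SSTab}(\mu)_{\leq s}=\{U_1,\dots,U_N\}$, Proposition~\ref{prop-plethysm} gives
$$ s_\lambda[s_\mu](x_1,\dots,x_s)=s_\lambda(x^{U_1},\dots,x^{U_N}). $$
The essential point here is that the proposition instructs us to substitute the monomials \emph{with multiplicity}, so the right-hand side must be read as $s_\lambda$ evaluated at $N$ commuting variables indexed by the tableaux $U_i$, even if two of the monomials $x^{U_i}$ happen to coincide.

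Next I would order the variables so as to apply the combinatorial description of $s_\lambda$ a second time. Enumerate $U_1<U_2<\cdots<U_N$ according to the total order of Definition~\ref{order}, and set $y_i:=x^{U_i}$. Applying the same Schur formula in these $N$ ordered variables yields
$$ s_\lambda(y_1,\dots,y_N)=\sum_{P\in\mathrm{SSTab}(\lambda)_{\leq N}}\ \prod_{(i,j)\in\lambda}y_{P(i,j)}=\sum_{P\in\mathrm{SSTab}(\lambda)_{\leq N}}\ \prod_{(i,j)\in\lambda}x^{U_{P(i,j)}}. $$
I would then set up the bijection that is the crux of the argument: a tableau $P\in\mathrm{SSTab}(\lambda)_{\leq N}$ corresponds to the filling $T\colon(i,j)\mapsto U_{P(i,j)}$ of $\lambda$ by elements of $\mathrm{SSTab}(\mu)$. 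Because $U_1<\cdots<U_N$, the rows-weakly-increasing/columns-strictly-increasing condition on the integer entries $P(i,j)$ is exactly condition (ii) in the definition of $\mathrm{SSTab}(\lambda[\mu])$ phrased through Definition~\ref{order}, while condition (i) holds automatically since each $U_i\in\mathrm{SSTab}(\mu)$. Hence $P\mapsto T$ is a bijection from $\mathrm{SSTab}(\lambda)_{\leq N}$ onto the set of $T\in\mathrm{SSTab}(\lambda[\mu])$ all of whose entries lie in $[1,s]$, and $\prod_{(i,j)}x^{U_{P(i,j)}}=x^{\mathrm{wt}(T)}$. Combining the two displays gives $s_\lambda[s_\mu](x_1,\dots,x_s)=\sum_T x^{\mathrm{wt}(T)}$, the sum taken over those $T\in\mathrm{SSTab}(\lambda[\mu])$ with entries in $[1,s]$.

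Finally I would pass from this weight-generating function to the monomial expansion. Since a plethysm of symmetric functions is again symmetric, $s_\lambda[s_\mu]$ is a symmetric function, so in $\sum_T x^{\mathrm{wt}(T)}$ the number of $T$ with a given weight depends only on the partition rearrangement of that weight; for a partition $\nu$ the coefficient of $x^\nu$ is by definition $Y_{\lambda[\mu]}^{\nu}=\#\mathrm{SSTab}(\lambda[\mu];\nu)$, and a weight $\nu$ forces all entries to be at most $\ell(\nu)$, so this count stabilizes once $s\geq\ell(\nu)$. Collecting the monomials of each partition type therefore gives $s_\lambda[s_\mu](x_1,\dots,x_s)=\sum_{\nu\vdash mn}Y_{\lambda[\mu]}^{\nu}\,m_\nu(x_1,\dots,x_s)$, where the terms with $\ell(\nu)>s$ vanish automatically. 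As this holds for every $s$, the corresponding identity of symmetric functions follows.

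The step I expect to be the main obstacle is the bijection in the second paragraph: one must check carefully that the lexicographic total order on $\mathrm{SSTab}(\mu)$ used to index the fictitious variables is precisely the order that makes semistandardness of $P$ over $[1,N]$ equivalent to condition (ii) for $\lambda[\mu]$, and one must be scrupulous about treating repeated monomials $x^{U_i}$ as distinct indexed variables, as Proposition~\ref{prop-plethysm} demands, rather than collapsing equal monomials, since collapsing would corrupt the count.
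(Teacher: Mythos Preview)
Your proof is correct and follows essentially the same route as the paper: restrict to $s$ variables, expand $s_\mu$ as a sum of monomials indexed by $\mathrm{SSTab}(\mu)_{\le s}$, substitute these as the variables of $s_\lambda$ via Proposition~\ref{prop-plethysm}, apply the tableau expansion of $s_\lambda$ in the new variables, use the bijection $\mathrm{SSTab}(\lambda)_{\le N}\to\mathrm{SSTab}(\lambda[\mu])_{\le s}$ induced by the total order of Definition~\ref{order}, and then let $s\to\infty$. Your write-up is in fact more explicit than the paper's about why the bijection respects semistandardness and about the passage to monomial symmetric functions; the only cosmetic difference is that the paper indexes the inner tableaux in decreasing order rather than increasing.
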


\begin{proof}

Before the proof, we introduce some notations.
For a positive integer $s$, we set $r=\#\mathrm{SSTab}(\mu)_{\le s}$. 
For $1 \le i \le r$, let $T_i$ be the $i$-th largest semi standard tableau in $\mathrm{SSTab}(\mu)_{\le s}$ and 
set $y_i = x^{T_i}$. 
In particular, $ \mathrm{SSTab}(\mu)_{\le s}=\{ T_1,\cdots,T_r \} $. 
Note that there is a natural bijection 
$$
\begin{array}{ccccc}
\iota \colon  
	\mathrm{SSTab}(\lambda)_{\le r} & \longrightarrow 
	& \mathrm{SSTab}(\lambda , \mathrm{SSTab}(\mu)_{\le s}) & \longrightarrow 
	& \mathrm{SSTab}(\lambda [\mu])_{\le s} 
\end{array}
$$
such that $y^{U} = x^{\iota(U)}$ for $U \in \mathrm{SSTab}(\lambda)_{\le r}$. 
For example, if $\lambda = (1^2)$ , $\mu = (2)$, $T_1= $ 
\unitlength 0.1in
\begin{picture}(  4.4500,  2.0000)(  0.5000, -2.4000)
%
\special{pn 8}%
\special{pa 86 40}%
\special{pa 496 40}%
\special{pa 496 240}%
\special{pa 86 240}%
\special{pa 86 40}%
\special{fp}%
%
\special{pn 8}%
\special{pa 286 40}%
\special{pa 286 240}%
\special{fp}%
\put(1.8500,-1.4000){\makebox(0,0){$1$}}%
\put(3.9500,-1.5000){\makebox(0,0){$1$}}%
\end{picture}
\unitlength 0.1in
\begin{picture}(  4.4500,  2.0000)(  0.5000, -2.4000)
%
\special{pn 8}%
\special{pa 86 40}%
\special{pa 496 40}%
\special{pa 496 240}%
\special{pa 86 240}%
\special{pa 86 40}%
\special{fp}%
%
\special{pn 8}%
\special{pa 286 40}%
\special{pa 286 240}%
\special{fp}%
\put(1.8500,-1.4000){\makebox(0,0){$1$}}%
\put(3.9500,-1.5000){\makebox(0,0){$2$}}%
\end{picture}
	$U=$ 
\unitlength 0.1in
\begin{picture}(  4.1000,  4.1000)(  0.8000, -4.7000)
%
\special{pn 20}%
\special{pa 80 60}%
\special{pa 490 60}%
\special{pa 490 470}%
\special{pa 80 470}%
\special{pa 80 60}%
\special{fp}%
%
\special{pn 20}%
\special{pa 90 260}%
\special{pa 480 260}%
\special{fp}%
\put(2.7000,-1.6000){\makebox(0,0){$1$}}%
\put(2.8000,-3.8000){\makebox(0,0){$2$}}%
\end{picture}
$\iota(U)=$ 
\unitlength 0.1in
\begin{picture}(  6.0000,  6.3000)(  0.9000, -6.9000)
%
\special{pn 8}%
\special{pa 176 440}%
\special{pa 586 440}%
\special{pa 586 640}%
\special{pa 176 640}%
\special{pa 176 440}%
\special{fp}%
%
\special{pn 8}%
\special{pa 376 440}%
\special{pa 376 640}%
\special{fp}%
\put(2.7500,-5.4000){\makebox(0,0){$1$}}%
\put(4.8500,-5.5000){\makebox(0,0){$2$}}%
%
\special{pn 8}%
\special{pa 176 110}%
\special{pa 586 110}%
\special{pa 586 310}%
\special{pa 176 310}%
\special{pa 176 110}%
\special{fp}%
%
\special{pn 8}%
\special{pa 376 110}%
\special{pa 376 310}%
\special{fp}%
\put(2.7500,-2.1000){\makebox(0,0){$1$}}%
\put(4.8500,-2.2000){\makebox(0,0){$1$}}%
%
\special{pn 13}%
\special{pa 90 60}%
\special{pa 690 60}%
\special{pa 690 690}%
\special{pa 90 690}%
\special{pa 90 60}%
\special{fp}%
%
\special{pn 13}%
\special{pa 90 380}%
\special{pa 690 390}%
\special{fp}%
\end{picture}
	$y_1=x^{T_1}=x_1^2$, $y_2=x^{T_2}=x_1x_2$, $y^U=y_1y_2$ and $x^{\iota(U)}=x_1^3x_2$.

By Corollary 2.7, in the case of $s$-variables we have; 
$$ 
s_\mu(x_1,\cdots,x_s)
=\sum_{T \in \mathrm{SSTab}(\mu)_{\le s}}x^{T} 
=y_1 + y_2 + \cdots + y_r.
$$
Thus by Proposition \ref{prop-plethysm}, we have;
\begin{align*}
s_\lambda[s_\mu](x_1,\cdots,x_s) 
&= s_\lambda(x^{T_1},\cdots,x^{T_r})  = s_\lambda(y_1,\cdots,y_r) \\
&= \sum_{U \in \mathrm{SSTab}(\lambda)_{\le r}} y^{U} \\
&= \sum_{\iota(U) \in \mathrm{SSTab}(\lambda [\mu])_{\le s}} x^{\iota(U)} .
\end{align*}
Here by taking the limit $s \rightarrow \infty$, 
we have the following equality as symmetric function;
$$ s_\lambda[s_\mu]
=\sum_{T \in \mathrm{SSTab}(\lambda [\mu])} x^T 
=\sum_{\nu \vdash mn}Y_{\lambda[\mu]}^{\nu} \, m_\nu .
$$
\end{proof}


\section{The first term of plethysm}

\begin{definition}
Let $\lambda \vdash m$ and $\mu \vdash n$.
The {\it first term} of the plethysm $s_\lambda[s_\mu]$ is the maximal element in 
the set $\{ \nu \vdash mn | a_{\lambda[\mu]}^{\nu} \not= 0 \}$ with respect to 
the reverse lexicographic order.  

\end{definition}

\begin{theorem} 
{\rm([2;Conjecture 2],[3;Conjecture 1.2] and [12;Conjecture 5.1])}
\par Let $\lambda \vdash m$,$\mu \vdash n$,$l=l(\lambda)$ and $l'=l(\mu)$.
$($Where $l=l(\lambda)$ is the length of partition $\lambda$.$)$
Then the first term of plethysm $s_\lambda[s_\mu]$ is 
$$ \nu_0:=(m\mu_1,m\mu_2,\cdots,m(\mu_{l'}-1)+\lambda_1,\lambda_2,\cdots,\lambda_l). $$
Moreover the coefficient of the first term is equal to $1$.

\begin{center}
\unitlength 0.1in
\begin{picture}( 38.2000, 17.1000)(  2.0000,-18.1000)
%
\special{pn 8}%
\special{pa 600 1430}%
\special{pa 600 1210}%
\special{fp}%
\special{sh 1}%
\special{pa 600 1210}%
\special{pa 580 1278}%
\special{pa 600 1264}%
\special{pa 620 1278}%
\special{pa 600 1210}%
\special{fp}%
\special{pa 600 1420}%
\special{pa 600 1780}%
\special{fp}%
\special{sh 1}%
\special{pa 600 1780}%
\special{pa 620 1714}%
\special{pa 600 1728}%
\special{pa 580 1714}%
\special{pa 600 1780}%
\special{fp}%
%
\special{pn 8}%
\special{pa 600 690}%
\special{pa 600 410}%
\special{fp}%
\special{sh 1}%
\special{pa 600 410}%
\special{pa 580 478}%
\special{pa 600 464}%
\special{pa 620 478}%
\special{pa 600 410}%
\special{fp}%
\special{pa 600 640}%
\special{pa 600 1170}%
\special{fp}%
\special{sh 1}%
\special{pa 600 1170}%
\special{pa 620 1104}%
\special{pa 600 1118}%
\special{pa 580 1104}%
\special{pa 600 1170}%
\special{fp}%
%
\special{pn 8}%
\special{pa 490 400}%
\special{pa 710 400}%
\special{fp}%
\special{pa 490 1200}%
\special{pa 720 1200}%
\special{fp}%
\special{pa 490 1790}%
\special{pa 710 1780}%
\special{fp}%
\put(2.0000,-16.2000){\makebox(0,0)[lb]{$l-1$}}%
\put(2.1000,-10.1000){\makebox(0,0)[lb]{$l'$}}%
%
\special{pn 8}%
\special{pa 1600 400}%
\special{pa 1600 1810}%
\special{fp}%
%
\special{pn 8}%
\special{pa 1600 1800}%
\special{pa 1800 1800}%
\special{fp}%
\special{pa 1800 1790}%
\special{pa 1800 1610}%
\special{fp}%
\special{pa 1800 1610}%
\special{pa 2000 1610}%
\special{fp}%
\special{pa 2000 1610}%
\special{pa 2000 1400}%
\special{fp}%
\special{pa 2000 1400}%
\special{pa 2210 1400}%
\special{fp}%
\special{pa 2210 1390}%
\special{pa 2210 1200}%
\special{fp}%
%
\special{pn 8}%
\special{pa 1600 1000}%
\special{pa 3410 1000}%
\special{pa 3410 1210}%
\special{pa 1600 1210}%
\special{pa 1600 1000}%
\special{fp}%
%
\special{pn 8}%
\special{pa 3410 1000}%
\special{pa 3610 1000}%
\special{fp}%
\special{pa 3610 990}%
\special{pa 3610 810}%
\special{fp}%
\special{pa 3610 810}%
\special{pa 3810 810}%
\special{fp}%
\special{pa 3810 810}%
\special{pa 3810 600}%
\special{fp}%
\special{pa 3810 600}%
\special{pa 4020 600}%
\special{fp}%
\special{pa 4020 590}%
\special{pa 4020 400}%
\special{fp}%
%
\special{pn 8}%
\special{pa 1600 410}%
\special{pa 4010 410}%
\special{fp}%
\special{pa 2410 990}%
\special{pa 2410 1200}%
\special{fp}%
%
\special{pn 4}%
\special{pa 2900 1000}%
\special{pa 2690 1210}%
\special{fp}%
\special{pa 2840 1000}%
\special{pa 2630 1210}%
\special{fp}%
\special{pa 2780 1000}%
\special{pa 2570 1210}%
\special{fp}%
\special{pa 2720 1000}%
\special{pa 2510 1210}%
\special{fp}%
\special{pa 2660 1000}%
\special{pa 2450 1210}%
\special{fp}%
\special{pa 2600 1000}%
\special{pa 2410 1190}%
\special{fp}%
\special{pa 2540 1000}%
\special{pa 2410 1130}%
\special{fp}%
\special{pa 2480 1000}%
\special{pa 2410 1070}%
\special{fp}%
\special{pa 2960 1000}%
\special{pa 2750 1210}%
\special{fp}%
\special{pa 3020 1000}%
\special{pa 2810 1210}%
\special{fp}%
\special{pa 3080 1000}%
\special{pa 2870 1210}%
\special{fp}%
\special{pa 3140 1000}%
\special{pa 2930 1210}%
\special{fp}%
\special{pa 3200 1000}%
\special{pa 2990 1210}%
\special{fp}%
\special{pa 3260 1000}%
\special{pa 3050 1210}%
\special{fp}%
\special{pa 3320 1000}%
\special{pa 3110 1210}%
\special{fp}%
\special{pa 3380 1000}%
\special{pa 3170 1210}%
\special{fp}%
\special{pa 3410 1030}%
\special{pa 3230 1210}%
\special{fp}%
\special{pa 3410 1090}%
\special{pa 3290 1210}%
\special{fp}%
\special{pa 3410 1150}%
\special{pa 3350 1210}%
\special{fp}%
%
\special{pn 4}%
\special{pa 2000 1420}%
\special{pa 1810 1610}%
\special{fp}%
\special{pa 2150 1210}%
\special{pa 1600 1760}%
\special{fp}%
\special{pa 1800 1620}%
\special{pa 1620 1800}%
\special{fp}%
\special{pa 1800 1680}%
\special{pa 1680 1800}%
\special{fp}%
\special{pa 1800 1740}%
\special{pa 1740 1800}%
\special{fp}%
\special{pa 2090 1210}%
\special{pa 1600 1700}%
\special{fp}%
\special{pa 2030 1210}%
\special{pa 1600 1640}%
\special{fp}%
\special{pa 1970 1210}%
\special{pa 1600 1580}%
\special{fp}%
\special{pa 1910 1210}%
\special{pa 1600 1520}%
\special{fp}%
\special{pa 1850 1210}%
\special{pa 1600 1460}%
\special{fp}%
\special{pa 1790 1210}%
\special{pa 1600 1400}%
\special{fp}%
\special{pa 1730 1210}%
\special{pa 1600 1340}%
\special{fp}%
\special{pa 1670 1210}%
\special{pa 1600 1280}%
\special{fp}%
\special{pa 2200 1220}%
\special{pa 2020 1400}%
\special{fp}%
\special{pa 2210 1270}%
\special{pa 2080 1400}%
\special{fp}%
\special{pa 2210 1330}%
\special{pa 2140 1400}%
\special{fp}%
\special{pa 2000 1480}%
\special{pa 1870 1610}%
\special{fp}%
\special{pa 2000 1540}%
\special{pa 1930 1610}%
\special{fp}%
\put(14.0000,-16.2000){\makebox(0,0)[lb]{$\lambda'$}}%
%
\special{pn 20}%
\special{ar 2420 1270 416 416  6.2831853 6.2831853}%
\special{ar 2420 1270 416 416  0.0000000 1.5707963}%
%
\special{pn 20}%
\special{pa 2400 1680}%
\special{pa 2270 1690}%
\special{fp}%
\special{sh 1}%
\special{pa 2270 1690}%
\special{pa 2338 1706}%
\special{pa 2324 1686}%
\special{pa 2336 1666}%
\special{pa 2270 1690}%
\special{fp}%
\special{pa 3960 1390}%
\special{pa 3960 1390}%
\special{fp}%
\put(29.5000,-17.1000){\makebox(0,0)[lb]{move}}%
\put(36.1000,-16.1000){\makebox(0,0)[lb]{$\lambda' = (\lambda_2 , \cdots , \lambda_l)$}}%
\put(24.4000,-2.7000){\makebox(0,0)[lb]{$m \mu_1$}}%
%
\special{pn 8}%
\special{pa 1600 400}%
\special{pa 1760 310}%
\special{fp}%
\special{pa 1760 310}%
\special{pa 1910 270}%
\special{fp}%
\special{pa 4000 400}%
\special{pa 3810 300}%
\special{fp}%
\special{pa 3820 300}%
\special{pa 3650 270}%
\special{fp}%
\special{pa 3650 270}%
\special{pa 3580 260}%
\special{fp}%
\put(21.7000,-9.0000){\makebox(0,0)[lb]{$m \mu_{l'}$}}%
%
\special{pn 8}%
\special{pa 1600 1000}%
\special{pa 1760 930}%
\special{fp}%
\special{pa 1760 920}%
\special{pa 1930 890}%
\special{fp}%
\special{pa 3410 1010}%
\special{pa 3350 890}%
\special{fp}%
\special{pa 3350 890}%
\special{pa 3210 850}%
\special{fp}%
\end{picture}%
\end{center}

\end{theorem}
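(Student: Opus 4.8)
The plan is to separate a short formal reduction from the combinatorial core. For the reduction I would start from the monomial expansion $s_\lambda[s_\mu]=\sum_\nu Y_{\lambda[\mu]}^\nu m_\nu$ of Theorem 3.5 and compare it with the Schur expansion $s_\lambda[s_\mu]=\sum_\nu a_{\lambda[\mu]}^\nu s_\nu$. Writing $m_\rho=\sum_\nu K_{\rho,\nu}^{-1}s_\nu$ and using Corollary 2.6(ii), which says $K_{\rho,\nu}^{-1}=0$ unless $\nu\le\rho$ while $K_{\rho,\rho}^{-1}=1$, I would equate coefficients of $s_\nu$ to obtain $a_{\lambda[\mu]}^{\nu}=\sum_{\rho\ge\nu}Y_{\lambda[\mu]}^{\rho}K_{\rho,\nu}^{-1}$. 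If $\nu_0$ is the reverse-lexicographically largest partition with $Y_{\lambda[\mu]}^{\nu_0}\neq 0$, this triangularity forces $a_{\lambda[\mu]}^{\nu}=0$ for every $\nu>\nu_0$ and $a_{\lambda[\mu]}^{\nu_0}=Y_{\lambda[\mu]}^{\nu_0}$. Thus the entire theorem reduces to two combinatorial statements about fillings of $\lambda[\mu]$: that the reverse-lex-largest achievable weight is exactly $\nu_0=(m\mu_1,\dots,m(\mu_{l'}-1)+\lambda_1,\lambda_2,\dots,\lambda_l)$, and that there is a \emph{unique} tableau of that weight.

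For the combinatorial core I would run a greedy analysis that matches the reverse lexicographic order by fixing $\nu_1$, then $\nu_2$, and so on. Since each of the $m$ blocks is an element of $\mathrm{SSTab}(\mu)$, its entries $\le k$ lie in its first $k$ rows, so the partial sum $\nu_1+\cdots+\nu_k$ never exceeds $m(\mu_1+\cdots+\mu_k)$; maximizing the weight value by value then forces, for each $k<l'$, every block to have constant $k$-th row equal to $k$. The decisive new input appears in the last row: among blocks with these fixed top rows, the block $T_\mu$ whose $i$-th row is constant equal to $i$ is the unique minimum of $\mathrm{SSTab}(\mu)$, so condition (ii) of Definition 3.1 (strict increase down columns of $\lambda$) permits at most one copy of $T_\mu$ per column, hence at most $\lambda_1$ of them. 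Counting $l'$-entries gives $\nu_{l'}\le m(\mu_{l'}-1)+\lambda_1$, with equality forcing exactly $\lambda_1$ copies of $T_\mu$, which as column-minima must occupy the first row of $\lambda$, and forcing every remaining block to carry exactly one entry $>l'$ at the end of its last row.

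Continuing the greedy step, I would prove by induction that a block whose last row is $(l',\dots,l',l'+k)$ can occur only in row $k+1$ of $\lambda$: in any column the last rows increase strictly, so below the forced row-$k$ value $l'+k-1$ the next entry is $\ge l'+k$ while any lower cell is $\ge l'+k+1$. Maximizing $\nu_{l'+k}$ therefore yields $\nu_{l'+k}\le\lambda_{k+1}$, with equality forcing all $\lambda_{k+1}$ cells of row $k+1$ to hold the single block with last row $(l',\dots,l',l'+k)$. This simultaneously identifies the largest weight as $\nu_0$ and determines the maximizing tableau completely---row $a$ of $\lambda$ is monochromatic in block type---so $Y_{\lambda[\mu]}^{\nu_0}=1$, and the reduction gives $a_{\lambda[\mu]}^{\nu_0}=1$. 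I would finish by checking directly that this configuration is a legitimate element of $\mathrm{SSTab}(\lambda[\mu])$ and that $\nu_0$ is genuinely a partition.

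I expect the main obstacle to be the third paragraph: converting the intuition ``strictness pushes large values low in each column'' into the clean bounds $\nu_{l'+k}\le\lambda_{k+1}$, and, more delicately, verifying that the weight $\nu_0$ alone pins down the \emph{placement} of the blocks rather than merely their multiset---this is exactly what makes the coefficient equal to $1$ instead of just positive. I would control this by comparing blocks through their reading words (Definition 2.1) and tracking, column by column, the strictly increasing sequence of last-row maxima $l',l'+1,l'+2,\dots$, which supplies both the inequalities and the monochromaticity of each row of $\lambda$.
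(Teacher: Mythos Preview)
Your proposal is correct and follows essentially the same route as the paper. The reduction step is identical: the paper obtains $a_{\lambda[\mu]}^{\nu}=Y_{\lambda[\mu]}^{\nu}+\sum_{\kappa>\nu}K_{\kappa,\nu}^{-1}Y_{\lambda[\mu]}^{\kappa}$ by pairing with $s_\nu=\sum_\kappa K_{\kappa,\nu}^{-1}h_\kappa$, which is the $\langle m,h\rangle$-dual of your substitution $m_\rho=\sum_\nu K_{\rho,\nu}^{-1}s_\nu$, and then reduces to the same two claims about $Y_{\lambda[\mu]}^{\nu_0}$. For the combinatorial core the paper simply lists the first $l$ inner tableaux $T_1,\dots,T_l\in\mathrm{SSTab}(\mu)$ in the order of Definition~2.1 (exactly your blocks with last row $(l',\dots,l',l'+k)$), records their weights, and asserts that the unique weight-maximizing filling of $\lambda[\mu]$ puts $T_k$ throughout row $k$ of $\lambda$; your greedy, coordinate-by-coordinate argument is a more explicit justification of precisely this assertion.
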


\begin{proof}
By proposition 2.3, 
note that $$ s_\nu = \sum_{\kappa}K_{\kappa,\nu}^{-1} \,\, h_\kappa. $$
Then we have;
\begin{align*}
 a_{\lambda[\mu]}^{\nu} &= \langle s_\lambda[s_\mu] , s_\nu \rangle \\
  &= \langle s_\lambda[s_\mu] , \sum_{\kappa}K_{\kappa,\nu}^{-1} \,\, h_\kappa \rangle \\
  &= \sum_{\kappa} K_{\kappa,\nu}^{-1}\,\, \langle s_\lambda[s_\mu] , h_\kappa \rangle \\
  &= \sum_{\kappa} K_{\kappa,\nu}^{-1} \,\, Y_{\lambda[\mu]}^{\kappa}
   \,\,\,,\,\,\,(\text{ by Theorem 3.5 and property of }
    \langle \,\,,\,\, \rangle )  \\
  &= Y_{\lambda,\mu}^{\nu} \,+\, \sum_{\kappa > \nu}
   K_{\kappa,\nu}^{-1} \,\, Y_{\lambda[\mu]}^{\kappa} \,\,\,,\,\,\,
   (\text{by Corollary 2.6 (ii)})
\end{align*}

Thus the assertion follows from the next lemma.

\begin{lemma}
\begin{itemize}
\item[(1)] 
	$\mathrm{max} \,\{\,\nu \vdash mn \,|\, Y_{\lambda[\mu]}^{\nu}\not=0\,\} =\nu_0$. 
\item[(2)] 
	$Y_{\lambda[\mu]}^{\nu_0} =1$.  
\end{itemize}
\end{lemma}

\begin{proof}

We define a total order on the set of monomials in $x_i$'s $(i=1,2,\cdots)$ 
by lexicographic order.
(For example, $x_2^3<x_1x_2<x_1^2x_2<x_1^3$.)
Then we can arrange elements of $\mathrm{SSTab}(\mu)$ according to this order as follows;

\begin{center}
\input{figure3.tex}
\end{center}

And weight of these are 
$$ \begin{cases}
\mathrm{wt}(T_1) &=( \mu_1,\mu_2,\cdots,\mu_{l'}) \\
\mathrm{wt}(T_2) &=(\mu_1,\mu_2,\cdots,\mu_{l'}-1,1) \\
\mathrm{wt}(T_3) &=(\mu_1,\mu_2,\cdots,\mu_{l'}-1,0,1) \\
\,\,\, \vdots   & \hspace{4em} \vdots \\
\mathrm{wt}(T_l) &=(\mu_1,\mu_2,\cdots,\mu_{l'}-1,0,\cdots,0,1) .
\end{cases}  $$

Thus the sequence $T^{(1)} \ge T^{(2)} \ge \cdots \ge T^{(m)}$ that have 
a maximal weight under the condition $Y_{\lambda[\mu]}^{\nu} \not=0$ is only 
$$ \begin{cases}
T^{(1)}=\cdots =T^{(\lambda_1)} &=T_1 \\
T^{(\lambda_1+1)}=\cdots =T^{(\lambda_1+\lambda_2)} &=T_2 \\
\,\,\, \vdots   \phantom{abcdefghilklmno} \vdots & \phantom{abc} \vdots \\
T^{(m-\lambda_l+1)}=\cdots =T^{(m)} &=T_l \,\,\,\,\,\,.
\end{cases}  $$

Therefore the maximal weight is 
\begin{align*}
\mathrm{wt}(T^{(1)})+\cdots+\mathrm{wt}(T^{(m)}) &= \lambda_1\mathrm{wt}(T_1)+\cdots +\lambda_l \mathrm{wt}(T_l) \\
  &= (m \mu_1,m \mu_2,m (\mu_{l'}-1)+\lambda_1,\lambda_2,\cdots,\lambda_l) \\
  &= \nu_0
\end{align*}
\end{proof}
\end{proof}

In particular, from the first half of this proof we get a new combinatorial formula 
for plethysm.
$($Note that we can also express $K_{\kappa,\nu}^{-1}$ combinatorially.[6] $)$

\begin{corollary}
$$ a_{\lambda[\mu]}^{\nu}=
  \sum_{\kappa \vdash mn}K_{\kappa,\nu}^{-1}\,\,Y_{\lambda[\mu]}^{\kappa} .$$
\end{corollary}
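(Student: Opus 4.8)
The plan is to read off $a_{\lambda[\mu]}^{\nu}$ by pairing $s_\lambda[s_\mu]$ against a suitable symmetric function using the form $\langle\,,\,\rangle$, exactly as in the first half of the proof of Theorem 4.2, now isolated as a standalone statement. First I would note that, from the defining expansion $s_\lambda[s_\mu]=\sum_{\nu\vdash mn}a_{\lambda[\mu]}^{\nu}s_\nu$ together with the orthonormality $\langle s_\lambda,s_\mu\rangle=\delta_{\lambda,\mu}$, the coefficient is recovered as $a_{\lambda[\mu]}^{\nu}=\langle s_\lambda[s_\mu],s_\nu\rangle$.

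Next I would replace $s_\nu$ in the second slot by complete homogeneous functions. Proposition 2.3 expresses each $h_\kappa$ in terms of Schur functions through the Kostka matrix; since that matrix is unitriangular with respect to the reverse lexicographic order, hence invertible, inverting it expresses $s_\nu$ through the inverse Kostka coefficients as $s_\nu=\sum_{\kappa\vdash mn}K_{\kappa,\nu}^{-1}h_\kappa$. Substituting this and using bilinearity of $\langle\,,\,\rangle$ gives
\[
a_{\lambda[\mu]}^{\nu}=\Big\langle s_\lambda[s_\mu],\sum_{\kappa}K_{\kappa,\nu}^{-1}h_\kappa\Big\rangle=\sum_{\kappa\vdash mn}K_{\kappa,\nu}^{-1}\,\langle s_\lambda[s_\mu],h_\kappa\rangle .
\]
The sum is finite, running only over $\kappa\vdash mn$, so no convergence issue arises.

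The crux is the identification $\langle s_\lambda[s_\mu],h_\kappa\rangle=Y_{\lambda[\mu]}^{\kappa}$. For this I would invoke the duality of the bases $\{h_\kappa\}$ and $\{m_\rho\}$ under the form, namely $\langle h_\kappa,m_\rho\rangle=\delta_{\kappa,\rho}$, which is the property of $\langle\,,\,\rangle$ implicit in its definition. Writing $s_\lambda[s_\mu]=\sum_{\rho}Y_{\lambda[\mu]}^{\rho}m_\rho$ as in Theorem 3.5 and pairing against $h_\kappa$, every term but $\rho=\kappa$ vanishes, leaving $\langle s_\lambda[s_\mu],h_\kappa\rangle=Y_{\lambda[\mu]}^{\kappa}$. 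Feeding this back yields the asserted formula $a_{\lambda[\mu]}^{\nu}=\sum_{\kappa\vdash mn}K_{\kappa,\nu}^{-1}\,Y_{\lambda[\mu]}^{\kappa}$. Crucially, and unlike in the proof of Theorem 4.2, I would not apply Corollary 2.6 here, since the point of this corollary is precisely to retain the full sum over $\kappa$.

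The one genuinely non-formal point, and the step I expect to be the main obstacle, is justifying $\langle h_\kappa,m_\rho\rangle=\delta_{\kappa,\rho}$ from the paper's definition, which only posits $\langle s_\lambda,s_\mu\rangle=\delta_{\lambda,\mu}$. I would close this gap by recording the classical fact that $\{h_\kappa\}$ and $\{m_\rho\}$ are dual bases for the Hall inner product; everything else — bilinearity, the inversion of the Kostka matrix, and finiteness of the sum — is routine bookkeeping.
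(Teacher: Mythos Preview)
Your proof is correct and follows exactly the paper's own argument: the corollary is obtained in the paper precisely by extracting the fourth displayed line in the proof of Theorem~4.2, and your steps (pair against $s_\nu$, expand $s_\nu$ in the $h_\kappa$'s via the inverse Kostka matrix, then use Theorem~3.5 together with the duality $\langle m_\rho,h_\kappa\rangle=\delta_{\rho,\kappa}$) are identical to those steps. The paper likewise leaves the $h$--$m$ duality implicit under the phrase ``property of $\langle\,,\,\rangle$'', so your remark about justifying it is apt but not a divergence from the paper's approach.
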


\begin{example}
We calculate $a_{(2)[(2)]}^{(2,2)}$ from this formula.
Since $K_{(2,2),(2,2)}^{-1}=1 , K_{(3,1),(2,2)}^{-1}=-1 , K_{(4),(2,2)}^{-1}=0 ,
 Y_{(2)[(2)]}^{(2,2)}=2 $ and $ Y_{(2)[(2)]}^{(3,1)}=1 $, we have
$$a_{(2)[(2)]}^{(2,2)}=K_{(2,2),(2,2)}^{-1}\,Y_{(2)[(2)]}^{(2,2)}+
  K_{(3,1),(2,2)}^{-1}\,Y_{(2)[(2)]}^{(3,1)}+K_{(4),(2,2)}^{-1}\,Y_{(2)[(2)]}^{(4)}
  =2-1+0=1.$$
\end{example}


\section{Some remarks}

Our purpose for plethysm is stated as follows.

$\bf{Problem.}$ 
Express the expansion coefficients $a_{\lambda[\mu]}^{\nu}$ combinatorially.

\vspace{1em}
For this problem, by imitating a proof of Littlewood-Richardson rule given in [7], 
we have the followings.

\vspace{1em}
Let $l=l(\nu)$.
By Jacobi-Trudi's formula, we have
$$ s_\nu=\sum_{\pi \in S_l}\mathrm{sgn}(\pi)h_{\pi *\nu} \phantom{XX},
\phantom{X} (\pi *\nu=(\nu_{\pi(i)}-\pi_i+i)_{1 \le i \le l}) \,\,\,.$$
Here by Proposition 3.5, we have

\begin{align*}
a_{\lambda[\mu]}^{\nu} &= \langle s_\lambda[s_\mu],s_\nu \rangle  \\
  &= \sum_{\pi \in S_l}\mathrm{sgn}(\pi) \langle s_\lambda[s_\mu],h_{\pi *\nu} \rangle   \\
  &= \sum_{\pi \in S_l}\mathrm{sgn}(\pi)\,\, Y_{\lambda[\mu]}^{\pi*\nu} \,\,\,.
\end{align*}

So set $ A=\{ (\pi,T)|\,\pi \in S_l \,,\, T \in \mathrm{SSTab}(\lambda[\mu];\pi * \nu)\, \} $, 
then we have $$ a_{\lambda[\mu]}^{\nu}=\sum_{(\pi,T)\in A }\mathrm{sgn}(\pi)\,. $$
Therefore the following conjecture is expected.

\begin{conjecture}
There are a subset $S_0 \subset \mathrm{SSTab}(\lambda[\mu])$ and 
a bijective map 
$\phi \colon A-A_0 \ni (\pi,T)\rightarrow (\pi ',T') \in A-A_0$ 
such that $\mathrm{sgn}(\pi)=-\mathrm{sgn}(\pi ')$, where $A_0=\{ (\pi,T) \in A|\, T \in S_0 \, \} $ .
\end{conjecture}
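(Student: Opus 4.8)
The plan is to construct $\phi$ as a sign-reversing \emph{involution} on $A \setminus A_0$ (which is stronger than, and immediately implies, the bijection demanded in the statement), imitating the Lindstr\"om--Gessel--Viennot / Bender--Knuth involution that proves the Littlewood--Richardson rule from the Jacobi--Trudi determinant. We have already reduced matters to $a_{\lambda[\mu]}^{\nu}=\sum_{(\pi,T)\in A}\mathrm{sgn}(\pi)$, so if $\phi$ pairs up the elements of $A\setminus A_0$ with opposite permutation signs we get $\sum_{A\setminus A_0}\mathrm{sgn}(\pi)=0$ and hence $a_{\lambda[\mu]}^{\nu}=\sum_{A_0}\mathrm{sgn}(\pi)$. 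The target is to choose $S_0$ (and thus $A_0$) so that every surviving pair has $\pi=\mathrm{id}$, whence $a_{\lambda[\mu]}^{\nu}=\#S_0\ge 0$ and $S_0$ is the sought combinatorial model.

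First I would make the content bookkeeping explicit. For $(\pi,T)\in A$ the tableau $T\in\mathrm{SSTab}(\lambda[\mu])$ has content $\pi*\nu$, i.e. exactly $(\pi*\nu)_i$ entries equal to $i$. Since $Y^{\kappa}_{\lambda[\mu]}$ is the coefficient of a monomial in the symmetric function $s_\lambda[s_\mu]$, it depends only on the multiset of parts of $\kappa$, so passing from $\pi$ to the adjacent transposition $\pi\mapsto\pi\cdot(i,i+1)$ changes the content composition in a controlled, two-coordinate way. The involution should then scan $T$ in a fixed reading order (the order of Definition~\ref{order}), locate the first position where the entries violate the lattice/Yamanouchi-type condition attached to $\nu$, and perform the corresponding local exchange of the values $i$ and $i+1$ that realizes $\pi\mapsto\pi\cdot(i,i+1)$ and carries the content from $\pi*\nu$ to $\pi'*\nu$. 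By construction this flips $\mathrm{sgn}(\pi)$.

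The three verifications are: (a) the exchange again yields a semistandard tableau of shape $\lambda[\mu]$ of content $\pi'*\nu$, so $(\pi',T')\in A$; (b) $\phi$ is an involution, i.e. the rule applied to $(\pi',T')$ detects the violation at the same location and recovers $(\pi,T)$; and (c) the fixed-point-free domain is exactly $A\setminus A_0$, with $A_0=\{(\mathrm{id},T):T\in S_0\}$. Step (a) is where the plethystic nesting intervenes and where the classical Bender--Knuth move does not transfer verbatim: $T$ is not a filling of a single straight or skew shape but a semistandard arrangement of $m$ inner $\mu$-tableaux ordered as in Definition~\ref{order}, and a value $i$ is spread across several $\mu$-blocks and several rows of $\lambda$. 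A local swap must therefore simultaneously preserve column-strictness inside each $\mu$-block and the word-order between blocks, and it must remain reversible. Designing this swap, and characterizing $S_0$ intrinsically so that the non-cancelling pairs are precisely $(\mathrm{id},T)$ with $T\in S_0$, is the main obstacle; it is exactly the point at which a genuinely new rule adapted to $\lambda[\mu]$ is needed, and a successful construction would at once furnish the combinatorial formula for $a_{\lambda[\mu]}^{\nu}$ sought in the Problem above.
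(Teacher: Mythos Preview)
The paper does not prove this statement: it is explicitly labelled a \emph{Conjecture} and left open, with only the remark that in the Littlewood--Richardson case one may take $S_0$ to be the set of lattice words and define $\phi$ as in Gasharov~[7]. There is therefore no proof in the paper to compare your attempt against.

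Your proposal is not a proof either, and you say so yourself in the last paragraph: no swap rule is actually defined, and the ``main obstacle''---designing a local $i\leftrightarrow i+1$ exchange that simultaneously preserves semistandardness inside each $\mu$-block and the word order among the blocks, while remaining an involution---is left unresolved. What you have written is an accurate description of the natural strategy (Jacobi--Trudi plus a Bender--Knuth/Gasharov sign-reversing involution) together with a correct diagnosis of why the classical move does not transfer verbatim to the nested shape $\lambda[\mu]$; that difficulty is genuine, and resolving it would amount to solving the open problem the paper poses. One side remark worth recording: read as a bare existence assertion, the conjecture already follows nonconstructively from $a_{\lambda[\mu]}^{\nu}\ge 0$, since for each $T$ the permutation $\pi$ with $\pi*\nu=\mathrm{wt}(T)$ is unique (the map $j\mapsto\nu_j-j$ is strictly decreasing), so $|A^{+}|-|A^{-}|=a_{\lambda[\mu]}^{\nu}\ge 0$ and one may choose $S_0\subset A^{+}$ and a pairing arbitrarily. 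The intended content is of course an \emph{explicit} $\phi$ and an \emph{intrinsically described} $S_0$, which remains open.
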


In the case of Littlewood-Richardson rule, it is possible to take 
"the set of lattice permutations" as $A_0$.
Then $\phi$ can be defined "{\it properly}".$([7])$

\vspace{1em}
Indeed, the following property holds.

\begin{lemma}
For any $\lambda \vdash m,\mu \vdash n$ and $\nu \vdash mn$, 
we have $$ Y_{\lambda[\mu]}^{\nu} \ge a_{\lambda[\mu]}^{\nu}\,\,\,\,\,.$$
\end{lemma}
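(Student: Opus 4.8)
The plan is to compare the two expansions of $s_\lambda[s_\mu]$ term by term and exploit the nonnegativity of both Kostka numbers and plethysm coefficients. First I would start from the Schur expansion $s_\lambda[s_\mu]=\sum_{\kappa \vdash mn}a_{\lambda[\mu]}^{\kappa}s_\kappa$ and rewrite each Schur function using Definition 2.4, namely $s_\kappa=\sum_{\nu}K_{\kappa,\nu}m_\nu$. Collecting the coefficient of $m_\nu$ and comparing with the monomial expansion $s_\lambda[s_\mu]=\sum_{\nu}Y_{\lambda[\mu]}^{\nu}m_\nu$ furnished by Theorem 3.5, I obtain the identity
$$ Y_{\lambda[\mu]}^{\nu}=\sum_{\kappa \vdash mn}a_{\lambda[\mu]}^{\kappa}\,K_{\kappa,\nu}. $$

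Next I would isolate the $\kappa=\nu$ term. By Corollary 2.6 (i), $K_{\kappa,\nu}=0$ whenever $\kappa<\nu$, so only $\kappa \ge \nu$ contribute; moreover $K_{\nu,\nu}=1$ since the only semistandard tableau of shape $\nu$ and weight $\nu$ is the one whose $i$-th row is filled entirely with $i$'s. Hence
\begin{align*}
Y_{\lambda[\mu]}^{\nu} &= a_{\lambda[\mu]}^{\nu}\,K_{\nu,\nu}+\sum_{\kappa>\nu}a_{\lambda[\mu]}^{\kappa}\,K_{\kappa,\nu} \\
 &= a_{\lambda[\mu]}^{\nu}+\sum_{\kappa>\nu}a_{\lambda[\mu]}^{\kappa}\,K_{\kappa,\nu}.
\end{align*}

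Finally I would invoke two nonnegativity facts to conclude. By Theorem 2.5 we have $K_{\kappa,\nu}=\#\mathrm{SSTab}(\kappa;\nu)\ge 0$, and as recalled in the Introduction each plethysm coefficient $a_{\lambda[\mu]}^{\kappa}$ is a nonnegative integer (being a representation-theoretic multiplicity). Therefore the remaining sum $\sum_{\kappa>\nu}a_{\lambda[\mu]}^{\kappa}\,K_{\kappa,\nu}$ is nonnegative, and the inequality $Y_{\lambda[\mu]}^{\nu}\ge a_{\lambda[\mu]}^{\nu}$ follows at once. The only nontrivial input is the nonnegativity of $a_{\lambda[\mu]}^{\kappa}$, which is not combinatorially self-evident but is already available from the $GL_N$- or $S_n\wr S_m$-module interpretation cited earlier; everything else is a direct bookkeeping of the unitriangular change of basis between Schur and monomial functions, so I do not expect any real obstacle beyond assembling these ingredients.
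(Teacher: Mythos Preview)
Your proof is correct and follows exactly the same route as the paper: expand $s_\lambda[s_\mu]$ in Schur functions, convert to monomial symmetric functions via the Kostka relation to obtain $Y_{\lambda[\mu]}^{\nu}=\sum_{\kappa}a_{\lambda[\mu]}^{\kappa}K_{\kappa,\nu}$, then use the unitriangularity $K_{\nu,\nu}=1$, $K_{\kappa,\nu}=0$ for $\kappa<\nu$, together with the nonnegativity of $a_{\lambda[\mu]}^{\kappa}$ and $K_{\kappa,\nu}$.
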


\begin{proof}
Recall
$$ s_\lambda[s_\mu]=\sum_{\kappa \vdash mn}a_{\lambda[\mu]}^{\kappa}s_\kappa \,\,, $$
and comparing the coefficients of the monomial symmetric function $m_{\nu}$, 
\par we have

\begin{align*}
Y_{\lambda[\mu]}^{\nu}
&=\sum_{\kappa \vdash mn}a_{\lambda[\mu]}^{\kappa}K_{\kappa,\nu}  \\
&=a_{\lambda[\mu]}^{\nu}+
    \sum_{\kappa > \nu}a_{\lambda[\mu]}^{\kappa}K_{\kappa,\nu}
    \,\,\,\,\,,\,\,\,
    (\text{ by Corollary 2.6 (i) and } \,\, K_{\nu,\nu}=1)       \\
&\ge a_{\lambda[\mu]}^{\nu} 
    \,\,\,\,\,\,\,\,\,\,\,\,\,\,\,\,\,\,\,\,\,\,,\,\,\,
    (a_{\lambda[\mu]}^{\kappa}\ge 0 \text{ and } K_{\kappa,\nu} \ge 0).
\end{align*}

\end{proof}


\end{document}